\definecolor{mygreen}{rgb}{0.1,0.75,0.2}
 \newtheorem{thm}{Theorem}[section]
 \newtheorem{lem}[thm]{Lemma}
 \newtheorem{prop}[thm]{Proposition}
  \newtheorem{asm}[thm]{Assumption}
 \theoremstyle{definition}
 \theoremstyle{remark}
 \newtheorem{rem}{Remark}
 \numberwithin{equation}{section}
\providecommand{\bbs}[1]{\left(#1\right)}
\newcommand{\bR}{\mathbb{R}}
\newcommand{\pt}{\partial}
\newcommand{\eps}{\varepsilon}
\newcommand{\ud}{\,\mathrm{d}}
\newcommand{\8}{\infty}
\begin{document}

\title[Asymptotic stability for dynamic boundary reaction]{Asymptotic stability for diffusion with dynamic boundary reaction from Ginzburg-Landau energy}

\author{Yuan Gao}
\address{Department of Mathematics, Purdue University, West Lafayette, IN,
  47907, USA}
\email{gao662@purdue.edu}

\author{Jean-Michel Roquejoffre}

\address{Institut de Math\'ematiques de Toulouse; UMR 5219 Universit\'e de Toulouse; CNRS
Universit\'e Toulouse III, 118 route de Narbonne, 31062 Toulouse, France}
\email{jean-michel.roquejoffre@math.univ-toulouse.fr}

\keywords{Long time behavior, metastability, algebraic decay, boundary stabilization, double well potential}

\subjclass[2010]{35K57, 35B35, 74H40}

\date{\today}

\begin{abstract}
The nonequilibrium process in dislocation dynamics and its relaxation to the metastable transition profile is crucial for understanding the plastic deformation caused by line defects in materials.  In this paper, we consider the full dynamics of a scalar dislocation model in two dimensions described by the bulk diffusion equation coupled with dynamic boundary condition on the interface, where a nonconvex misfit potential, due to the presence of dislocation, yields an interfacial reaction term on the interface. We prove the dynamic solution to this bulk-interface coupled system will uniformly converge to the metastable transition profile, which has a bi-states with fat-tail decay rate at the far fields. This global stability for the metastable pattern is the first result for a bulk-interface  coupled dynamics driven only by an interfacial reaction on the slip plane. 
\end{abstract}

\maketitle


\section{Introduction}
Metastable pattern formations are fundamentally important processes in materials science. The associated nonequilibrium dynamics is usually determined by the internal microscopic structure but can also be approximated by a macroscopic model after incorporating some nonlinear interfacial potentials.

In this paper, we study 
the  relaxation process to a metastable  transition profile for the following  full dynamics in terms of a scalar displacement function  $u(t, x,y)$  
\begin{equation}\label{maineq}
\left\{ \begin{array}{c}
\pt_t u - \Delta u=0, \quad y>0;\\
\pt_t u - \pt_y u + W'(u)=0, \quad y=0,
\end{array} \right. 
\end{equation}
where $W$ is a double well potential function with equal minima $W(\pm 1)$ and satisfies \eqref{potential} below. 
The main goal is to obtain the uniform convergence to a nontrivial steady solution, i.e., a metastable transition profile $\phi(x,y)$ connecting $\pm1$ at far fields $x\to \pm\8$; see Fig. \ref{fig1} (Right). Thus we assume for any fixed $y\geq 0$, the
 initial data $u_0(x,y)$ has bi-states $\pm 1$ as $x\to \pm\8$, which is specifically described in Assumption \ref{asm1}.

This model is motivated by nonlinear  dislocation dynamics, which consists of  the dynamics of the elastic continua for $y>0$ and the nonlinear reaction induced by the interfacial misfit potential $W$ on the slip plane $\Gamma:=\{(x,y)\in \bR^2;\,  y=0\}.$ The static dislocation model incorporating the atomistic misfit on the interface using $W$ was first proposed  by \textsc{Peierls and Nabarro} \cite{Peierls, Nabarro}  to study the atomic core structure near the dislocation line; see Fig. \ref{fig1}(left) and detailed physical derivations in Appendix \ref{app1}.
The presence of  dislocation is represented by a nonlinear interfacial
potential $W$ on the interface $\Gamma$.
Assume the double well/periodic potential $W$ satisfies
\begin{equation}\label{potential}
\begin{aligned}
 &W \in C_b^{3}(\mathbb{R}; \bR), \vspace{1ex} \\ &W(x)>W(1)=W(-1),\quad x \in \left(-1, 1 \right), \vspace{1ex} \\
 &  W''\left(\pm 1\right)>0, \quad W'(\pm 1)=0.
 \end{aligned}
 \end{equation}
For presentation's simplicity, we also assume $W'(0)=0$. Notice here the unstable state $0$ and the stable states $\pm 1$ can be chosen as other  generic constants without loss of generality.

The motion of dislocations, the most common line defects in materials science,  will lead to plastic deformations. 
Unlike previous related results in mathematical analysis, which assume  quasi-static elastic bulks, i.e., $\Delta u=0$  or assume a static Lam\'e system for $y>0$, 
 the full dynamics  of dislocations in \eqref{maineq} for the elastic bulks $y>0$ and the slip plane $\Gamma$ are coupled together. Physically, the fully coupled system exchanges both the mass and the energy on the bulk-boundary interface. This interaction between bulk and interface is very common in materials while the global stability analysis for the metastable equilibrium  of this kind of bulk-interface interactive dynamics is absent in the literature.  The global stability result in this paper will unveil the relaxation process of materials with dislocation structure. Precisely, starting from a perturbed initial data, probably due to impulsive stress, the full dynamics of the materials will eventually converge to metastable steady profile.
Notice there is no Dirichlet to Neumann map $\pt_n u  =(-\Delta)^{\frac12}{u} |_\Gamma$  to reduce \eqref{maineq} to a 1D nonlocal diffusion-reaction equation (see \eqref{quasi}) only on the interface $\Gamma$. 
Therefore in the interactive dynamics \eqref{maineq}, whether all the dynamic solutions of \eqref{maineq} will uniformly converge to a single metastable transition profile (see $\phi(x,y)$ below)  and the uniform relaxation rate  are still open.
\begin{figure}
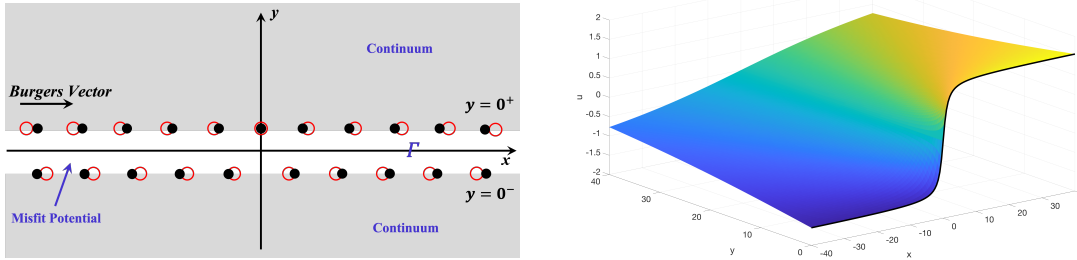

\includegraphics[scale=0.34]{pn2c.pdf}
\hspace{0.2in}
 \includegraphics[scale=0.38]{singleU.pdf} 
 \caption{(Left) Illustration of the elastic bulk continua and the atomic misfit structure at the slip plane $\Gamma$ in 2D Peierls-Nabarro dislocation model; (Right) The typical steady
transition profile for   $\phi(x,y)$ in \eqref{SS} connecting bi-states $\pm1$ on $\Gamma$.}\label{fig1}
\end{figure}

\subsection*{Metastable transition profile.}
Let us first observe a special equilibrium profile when $W$ takes the special periodic form showing periodic lattice property for crystal materials, i.e., $W(u)= \frac{1}{\pi^2}(1+\cos(\pi u))$.  The metastable steady solution, unique up to translation in $x$ direction, to 
\begin{equation}\label{phi}
\begin{aligned}
\Delta \phi = 0, \quad y>0,\\
\pt_y \phi = W'(\phi), \quad y=0,
\end{aligned}
\end{equation}
with bi-states condition $\lim_{x\to \pm\8}\phi(x,0)=\pm 1$,   is given by \cite[Lemma 2.1]{CS05}
\begin{equation}\label{SS}
\phi(x,y)= \frac{2}{\pi} \arctan \frac{x}{y+1};
\end{equation}
see Fig \ref{fig1} (Right).
We can easily calculate the derivatives of $\phi$ as
\begin{equation}
\pt_x \phi(x,y) = \frac{2}{\pi} \frac{y+1}{(y+1)^2+ x^2}, \quad \pt_y \phi(x,y)= -\frac{2}{\pi} \frac{x}{x^2+(y+1)^2}.
\end{equation}
Importantly, it has been proved in \cite{CS05, PSV13}, for a general $W\in C^{2,\alpha}(\bR)$ satisfying \eqref{potential},  there exists a unique (up to translation in $x$ direction) metastable steady solution $\phi\in C^{2,\alpha}(\bR^2_+)$ to \eqref{phi} such that 
\begin{equation}\label{phiP}
\lim_{x\to \pm\8}\phi(x,0)=\pm 1, \quad \text{ and } \,\, \pt_x \phi(x, 0)>0.
\end{equation}
In \cite[Theorem 1.6]{CS05}, they recovered the far field decay rate of the metastable profile $\phi$
\begin{equation}\label{decayR}
\phi(x,0) \sim \pm 1 - \frac{c}{x}\quad  \text{ as }x\to \pm \8
\end{equation}
with some constant $c>0$ and 
\begin{equation}\label{decayDi}
|\nabla \phi (x,y)|\leq \frac{c}{1+\sqrt{x^2+y^2}}, \quad y\geq 0, \, x\in \bR.
\end{equation}
From now on, $c>0$ is a generic constant whose value may change from line to line.

\subsection*{Main result and approach}
Notice the far field bi-states condition $\lim_{x\to \pm\8}\phi(x,y)=\pm 1$ for the matastable  equilibrium itself is not uniformly in $y$. It suggests we impose the following assumptions on the initial data $u_0(x,y)$. Denote $\bR^2_+:=\{(x,y)\in \bR^2; y\geq 0\}$ and denote $C_b(\bR^2_+)$ as the space of bounded functions that continuous up to the boundary.
\begin{asm}\label{asm1}
Let $\phi(x,y)$ be the unique solution (up to translation in $x$) to \eqref{phi}. Assume there exist constants $\xi_1, \xi_2>0$ and a function $q_0(x,y)>0$ such that
\begin{enumerate}[(i)]
\item $q_0$ satisfies
\begin{equation}
\lim_{x\to \pm\8} q_0(x,y) = 0 \quad \text{ uniformly in }y, \quad q_0(x,0)\in L^1(\bR)\cap L^\8(\bR);
\end{equation}
\item the initial data $u_0(x,y)$ satisfies $|u_0(x,y)|<1$,
\begin{equation}
\phi(x+\xi_2, y) -q_0(x,y) \leq u_0(x,y) \leq \phi(x-\xi_1, y) + q_0(x,y).
\end{equation}
\end{enumerate}
\end{asm}

In the following theorem, we state the main result in this paper, i.e.,  the uniform convergence of the dynamic solution  to its metastable equilibrium.
\begin{thm}\label{mainthm}
Suppose the initial data $u^0$ satisfies Assumption \ref{asm1}. Then the dynamic solution $u(t,x,y)$ to \eqref{maineq} converges to the static solution $\phi(x,y)$ to \eqref{phi} in the sense that for any $\eps>0$, there exist $x_0$ and $T$ such that for any $t>T$ 
\begin{equation}
|u(t, x,y)-\phi(x-x_0,y)|<\eps \quad \text{ uniformly for } (x,y)\in \bR^2_+.
\end{equation}
\end{thm}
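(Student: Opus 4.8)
The plan is to use a \emph{sliding / sub--supersolution} argument in the spirit of the classical Fife--McLeod proof for scalar reaction-diffusion fronts, adapted to the bulk-interface coupled setting. The starting point is to note that \eqref{maineq} is order-preserving: if $u_0 \le v_0$ pointwise in $\bR^2_+$, then the corresponding solutions satisfy $u(t,\cdot)\le v(t,\cdot)$ for all $t>0$. This comparison principle holds because $W'\in C_b^2$ (so it is Lipschitz on the relevant range $|u|\le 1$) and the coupling $\pt_t u - \pt_y u + W'(u)=0$ on $\{y=0\}$ is of dynamic (Wentzell-type) form, for which a maximum principle is available. A preliminary step is therefore to record well-posedness in $C_b(\bR^2_+)$ together with this comparison principle, and the invariance of the band $|u|<1$ (since $\pm 1$ are stationary solutions and $W'(\pm1)=0$). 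By Assumption \ref{asm1}(ii), $u_0$ is squeezed between two translates of $\phi$ plus/minus the small tail $q_0$, so it suffices to build moving sub- and supersolutions of the form $\phi(x\mp c t \pm \xi_i, y) \mp \delta(t)$ (appropriately modified near $y=0$) that start above/below $u_0$ and relax toward a common translate of $\phi$.

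The key steps, in order, are: (1) \textbf{Construction of supersolution.} Set $\bar u(t,x,y) = \phi(x - \xi_1 + \omega(t), y) + \gamma q(t,x,y)$ for a small $\gamma$, where $q$ solves the linear bulk heat equation with the linearized dynamic boundary condition $\pt_t q - \pt_y q + W''(\phi)q = 0$ and $q(0,\cdot)\ge q_0$; because $W''(\pm1)>0$, $q$ decays, and using the decay estimates \eqref{decayR}--\eqref{decayDi} of $\phi$ plus $q_0(x,0)\in L^1\cap L^\8$ one shows $\|q(t,\cdot)\|_\8 \to 0$, say $\le C(1+t)^{-1/2}$. Choosing $\dot\omega(t)$ comparable to $\|q(t,\cdot)\|_\8$ (hence $\omega(t)$ bounded, $\omega(\8)=:\omega_\8<\8$) absorbs the error terms coming from the nonlinearity $W'(\bar u)-W'(\phi)-W''(\phi)\gamma q$, which are quadratic in $\gamma q$, against the spectral gap of the linearization near $\pm1$; this is exactly where the non-degeneracy $W''(\pm1)>0$ and $W\in C_b^3$ are used. (2) \textbf{Symmetric construction} of a subsolution $\underline u(t,x,y)=\phi(x+\xi_2 - \omega(t),y) - \gamma q(t,x,y)$. (3) \textbf{Ordering at $t=0$}: Assumption \ref{asm1}(ii) gives $\underline u(0,\cdot)\le u_0 \le \bar u(0,\cdot)$ after fixing $\gamma$ large enough that $\gamma q(0,\cdot)\ge q_0$. (4) \textbf{Squeezing}: by comparison, $\underline u(t,\cdot)\le u(t,\cdot)\le \bar u(t,\cdot)$ for all $t$; since $\gamma q(t,\cdot)\to 0$ uniformly and both $\phi(x\mp\xi_i\pm\omega(t),y)$ converge uniformly to $\phi(x+\omega_\8-\xi,y)$-type translates, one gets $\limsup_{t\to\8}\|u(t,\cdot)-\phi(\cdot - x_0^\pm,\cdot)\|_\8$ small; a further standard argument (trapping between two translates that can be made arbitrarily close, or a Lyapunov/$\omega$-limit argument using the Ginzburg--Landau energy to pin down the shift) upgrades this to convergence to a single translate $\phi(\cdot-x_0,\cdot)$, completing the proof.

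The main obstacle is \textbf{Step (1)}: constructing an explicit supersolution in the coupled bulk-interface geometry. Unlike the 1D or the quasi-static ($\Delta u=0$) case, there is no Dirichlet-to-Neumann reduction, so the perturbation $q$ genuinely lives in the half-plane and must be controlled both in the bulk and on $\Gamma$ simultaneously; in particular one must verify the differential inequality $\pt_t \bar u - \Delta \bar u \ge 0$ for $y>0$ \emph{and} $\pt_t\bar u - \pt_y\bar u + W'(\bar u)\ge 0$ on $y=0$ with the \emph{same} correctors $\omega,\gamma q$. The decay rate of $\phi$ is only algebraic (\eqref{decayR}: $\sim 1/x$), which is slower than exponential, so the spectral gap from $W''(\pm1)>0$ is only felt in a region $|x|\gtrsim R(t)$ that grows in time, and near $x=0$ one must instead exploit that $\pt_x\phi>0$ is bounded below on compacts together with the $L^1$-smallness of the tail to keep the error terms subordinate — making the choice of the time-dependent weight $q(t,x,y)$ (heat-kernel-like in the bulk, with a boundary layer near $y=0$) the delicate technical core. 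A secondary difficulty is controlling the non-uniformity in $y$ of the far-field condition flagged after Assumption \ref{asm1}: one must check that the supersolution's error terms, integrated against the bulk heat semigroup, do not accumulate as $y\to\8$, which uses \eqref{decayDi} to bound $|\nabla\phi|$ uniformly.
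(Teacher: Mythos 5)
Your overall strategy (a drifting translate of $\phi$ plus a decaying perturbation, comparison, then an energy/uniqueness step to pin down the shift) is the same as the paper's, but your Step (1) has two genuine gaps, and they sit exactly at the technical core of the problem. First, the decay of the perturbation is asserted, not proved, and the asserted rate is too weak for your own argument. You take $\dot\omega(t)$ comparable to $\|q(t,\cdot)\|_{\8}$ and claim both $\|q(t,\cdot)\|_\8\leq C(1+t)^{-1/2}$ and $\omega(\8)<\8$; these are incompatible, since $\int^\8 (1+t)^{-1/2}\ud t=\8$, so the translate would drift off to infinity and the trapping between two fixed translates is lost. For the sliding argument to close one needs $\|q(t,\cdot)\|_\8$ integrable in time, i.e.\ decay strictly faster than $t^{-1}$. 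Establishing such a rate for the bulk--interface linearization is precisely the paper's Lemma \ref{q-es}: the constant-coefficient problem \eqref{q-eq} is shown to decay like $t^{-3/2}$ on $\Gamma$, and the proof is not soft --- it requires decomposing $q$ into a Dirichlet heat problem in the bulk and a boundary-driven problem, and a Laplace--Fourier contour analysis of the symbol $\lambda+\mu+\sqrt{\lambda+\eta^2}$ (Section \ref{sec4}). Your one-line justification (``because $W''(\pm1)>0$, $q$ decays'') does not substitute for this, and the paper explicitly notes that no exponential decay or spectral-gap argument is available because the operator is not self-adjoint.

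Second, your choice of linearization is problematic: you let $q$ satisfy $\pt_t q-\pt_y q+W''(\phi)q=0$ on $\Gamma$. Since $W$ is a double well, $W''(\phi(x,0))$ changes sign in the transition region, and the linearization about $\phi$ has the translation mode $\pt_x\phi$ in its kernel, so solutions of this variable-coefficient problem need not decay at all; the positivity $W''(\pm1)>0$ only helps where $\phi$ is close to $\pm1$. The paper's construction avoids this by taking the \emph{constant}-coefficient boundary condition $\pt_t q-\pt_y q+\mu q=0$ with $\mu$ from \eqref{con_mu}, splitting space into the regions where $\phi\in[1-\delta,1]$, $[-1,-1+\delta]$ (where \eqref{con_mu} makes the nonlinear boundary inequality hold with no help from the drift) and the middle region $\phi\in[-1+\delta,1-\delta]$, where the bad term $kq$ from \eqref{con_k} is absorbed by the drift via $\beta\xi'\geq(\mu+k)q$ as in \eqref{xi_1} --- this is where the integrable-in-time decay of $q$ is consumed. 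You gesture at using $\pt_x\phi\geq\beta$ near the middle, but it is not incorporated into the verification of the differential inequalities, and as written the supersolution property fails there. (Your final step --- upgrading trapping between two translates to convergence to a single translate $\phi(\cdot-x_0,\cdot)$ --- is only sketched, but the route you indicate, compactness plus the energy dissipation law and uniqueness of the steady profile, is indeed how the paper concludes in Section \ref{sec3.2}.)
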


The key point in the proof is to construct a supersolution and a subsolution to the full dynamics \eqref{maineq} so that the full dynamics can be eventually controlled and uniformly converges to the static profile $\phi(x,y)$ to \eqref{phi}. This generic method of constructing super/subsolutions to study the global stability was first proposed in the pioneering work of \cite{FM} for the classical 1D Allen-Cahn equation with double well potential. However, without quasi-static assumption, one can not reduce the full dynamics into a 1D reaction-diffusion equation. Instead, the stability of the full 2D system is only provided by an interfacial double well  potential $W$ on $\Gamma$. That is to say, the interface reaction turns on its effect to the bulk $y>0$ only through the Neumann boundary condition $\pt_y u$, which does not have a time-independent Dirichlet to Neumann map.    Our construction of super/subsolutions relies on a time decay estimate for the linearized solution $q(x,y,t)$ to \eqref{maineq} (see \eqref{q-eq}), where the linearized system is also a bulk-interface interactive system.  With an initial perturbation $q_0>0$, whose bulk part and boundary part are both nonzero, our strategy is to leverage the heat kernel in 2D and estimate its impact to the boundary $\Gamma$ through the normal derivative $\pt_y q$. To do so,  we properly decompose the linearized system as two 2D heat equations with  different dynamic boundary conditions, which are coupled only through the boundary condition on $\Gamma$; see Lemma \ref{q-es} for the algebraic decay estimate for $q(t,x,y)\lesssim \frac{1}{1+t^{\frac32}}$. Compared with the  1D reaction-diffusion equation, we do not expect any exponential decay estimate for linearized solution $q$ because there is no spectral gap and the corresponding linear operator is not self-adjoint; see Section \ref{app2}. The construction of super/subsolutions for bulk-interface coupled dynamics is inspired by a series of works  \cite{BerestyckiRR_2013, BerestyckiCRR_2015, BerestyckiRR_2016} on the diffusion equation with a Fisher-KPP type reaction in the bulk and meanwhile is influenced by  a fast diffusion line on the boundary.
However, the double-well reaction $W'(u)$ in our model \eqref{maineq} presents  only on the lower dimensional interface (the slip plane $\Gamma$ of dislocations), which makes the uniform convergence more difficult because one wants to trap the whole half-plane dynamics using only reactions on the boundary.

\subsection*{State of the art}
To study mechanical behaviors of materials with the presence of dislocations, characterization of equilibrium profile and  the dynamic process, and also the corresponding relaxation rate to the equilibrium state have proceeded in various routes at the level of mathematical analysis.
In \cite{CS05}, \textsc{Cabr\'e and Sol\`a-Morales} established the existence  and the uniqueness (up to translations) of  monotonic solutions and  also proved the metastable profile is a local minimizer of the corresponding free energy
\begin{equation}\label{totalE}
E( {u}):=\frac 1 2\int_{\bR^2_+} |\nabla  {u}|^2\ud x\ud y+\int_\Gamma W( {u})\ud x.
\end{equation}
Later, \cite{PSV13}  directly proved the existence of the reduced nonlocal equation $(-\Delta)^{\frac12}{u}|_\Gamma=-W'({u})$ on $\Gamma$, which is derived via the Dirichlet to Neumann map $\pt_n u  =(-\Delta)^{\frac12}{u} |_\Gamma$.  Very recently, in \cite{dong2021existence}, the author proved the rigidity for a class of 3D vectorial dislocation model, which  states the equilibrium profile has to be a 1D profile with uniform displacement in $z$-direction. Moreover, using an elastic extension, in \cite{gao2021revisit} the author   rigorously connect  the 2D Lam\'e system with the nonlinear boundary condition   to the 1D reduced nonlocal equation.
For the dynamics of dislocations, existing results only work under a quasi-static assumption for the elastic bulks in $y>0$,  so that one can still use the Dirichlet to Neumann map to reduce the quasi-static dynamics as a 1D nonlocal reaction-diffusion equation
\begin{equation}\label{quasi}
\pt_t u + (-\Delta)^{\frac12}{u}=-W'({u}) \quad  \text{ on } \Gamma.
\end{equation}
 In \cite{GL20long}, the long time behavior  of the single edge dislocation and its exponential relaxation was proved via a new notion of $\omega$-limit set. At a macroscopic scale, the 1D slow motion of $N$-dislocations was studied in \cite{Mon1}; see also general cases including collisions of dislocations with different orientations in \cite{patrizi2015crystal, PV1, PV2} and for    general fractional Laplacian $(-\Delta  )^{\frac s2} \tilde{u}$ with $0<s<2$ in \cite{DPV, DFV}.
 These 1D results for the quasi-static model are  motivated by the pioneering works \cite{FM, Pego, bates1, Chen_2004} for the classical 1D local Allen-Cahn equation. For bulk-interface dynamics such as the Fisher-KPP diffusion-reaction coupled with an interfacial diffusion, \cite{BerestyckiRR_2013, BerestyckiCRR_2015, BerestyckiRR_2016, rossi2017effect, berestycki2020influence} studied the propagation of fronts, which is closely related to our bulk-interface dynamics but with KPP type reaction presenting in the bulk instead of on the interface.

In the remaining of this paper,   we first briefly explain in Section \ref{app2} why a solution to \eqref{maineq} exists,  then the asymptotic behavior is described in the remaining three sections. We prove the key estimates for the construction of supersolutions/subsolutions in Section \ref{sec3}. These rely on a decomposition for dynamic boundary condition and a heat kernel computation, which is developed in Section \ref{sec4}. Then in Section \ref{sec3.2}, we complete the proof of the main convergence result, Theorem \ref{mainthm}. The  gradient flow derivations for the bulk-interface dynamics \eqref{maineq} is shown in Appendix \ref{app1}.
 
\section{$C_0$-semigroup and existence of dynamic  solution }\label{app2}
As a preliminary,  in this section, we first clarify the linear operator for \eqref{maineq} is not self-adjoint operator and  the theory of $C_0$-semigroup solution for semilinear equations ensures the existence of a dynamic solution to \eqref{maineq}. 

 Without the nonlinear term $W$, regarding \eqref{maineq} as a Kolmogorov forward equation, \textsc{Feller}  explicitly characterized the generator $L$ with domain $D(L)$ of a contraction semigroup  in  \cite{Feller_1952, Feller_1954}; see also \cite{V59}. These pioneering works in 1950s first  identified all admissible boundary conditions for a second order differential operator $L$ to generate a contraction semigroup on a properly chosen Banach space. Denote $C(\overline{\mathbb{R}^{2}_+})$ as the space of continuous functions $u$ in $\{(x,y)\in\bR^2; y\geq 0\}$ such that there exists finite limit for $u$ at far fields. For any test function $f\in C_b^2(\overline{\mathbb{R}^{2}_+})$, 
$L : D(L )\subset C(\overline{\mathbb{R}^{2}_+}) \to C(\overline{\mathbb{R}^{2}_+})$ is defined as
\begin{equation}
L  f:=\left\{ \begin{array}{c}
\Delta f, \quad x\in \mathbb{R}^{2}_+,\\
- \pt_n f, \quad x\in \Gamma
\end{array}\right.
\end{equation}
with the domain  $$D(L )=\{f\in C_b^2(\overline{\mathbb{R}^{2}_+}); ~\Delta f= - \pt_n f \text{ for }x\in \Gamma\}.$$
Then  the full dynamics \eqref{maineq} in a matrix form  will be 
\begin{equation}\label{bweq}
\pt_t \bbs{
\begin{array}{c}
u\\
u_\Gamma
\end{array}
} =L u + \bbs{
\begin{array}{c}
0\\
-W'(u)
\end{array}
} =: \mathcal{L} u, \quad u(x,0)=u_0(x).
\end{equation}
Although $L $ is symmetric, it is not self-adjoint and the spectral analysis is more delicate. Indeed, the adjoint operator $L^*: D(L)\subset L^1(\bR^2_+) \times L^1(\Gamma)\to  L^1(\bR^2_+) \times L^1(\Gamma)$ is given by
\begin{equation}
L^*\rho:=\left\{ \begin{array}{c}
\Delta \rho, \quad x\in \mathbb{R}_+^2,\\
-\pt_n \rho, \quad x\in \Gamma.
\end{array}
\right.
\end{equation}   
The domain of the adjoint $L ^*$ has been characterized by \cite{Feller_1952}, 
\begin{equation}
D(L ^*)= \{(\rho, \rho_\Gamma)\in L^1(\bR^2_+) \times L^1(\Gamma); ~ \rho\in W^{2,1}(\bR^2_+) \}.
\end{equation}
 Here we notice for dimension $n=2$, the trace theorem implies $W^{2,1}(\bR^2_+)\hookrightarrow L^q(\Gamma)$ for any $1\leq q<+\8$. Thus the trace $\rho_\Gamma$ is well-defined.

There are many results  on that the linear operator $L$ generates a strongly continuous semigroup on the product space $L^1(\bR^2_+)\otimes L^1(\Gamma)$ or on $C(\overline{\mathbb{R}^{2}_+})\otimes C(\Gamma)$. For instance, the associated Feller semigroup on $C(\overline{\mathbb{R}^{2}_+})$ was studied in \cite{Feller_1952, Feller_1954}; see also \cite{Arendt_Metafune_Pallara_Romanell_2003, Engel_2003} for bounded  domain  $\Omega$ and  see detailed investigations in \cite[Proposition 9, Theorem 7]{Guidetti_2016} for  half space $\overline{\mathbb{R}^{2}_+}.$ Since $W(\cdot)\in C_b^3(\bR)$, the semigroup solution for the quasilinear one \eqref{bweq} can be obtained by proving that for some $\omega>0$ large enough, $\mathcal{L}-\omega I$  is the generator of a strongly continuous semigroup of contraction. 
Although the existence is not the focus of this paper, we refer to \cite{FGGR_2000, VV_2008, Xiao_Liang_2008, MKR, gao2018new} for existence results of various kinds of dynamic boundary condition problems.

\section{Construction of supersolutions and subsolutions}\label{sec3}

In this section, we will  give the crucial supersolution/subsolution estimates for the dynamic solution in Proposition \ref{prop1}. This relies on a detailed decay estimate for the corresponding linearized bulk-interface dynamics; see Lemma \ref{q-es}. Compared with the classical local/nonlocal diffusion-reaction equation, the decay rate w.r.t. time $t$ becomes an algebraic rate due to the bulk-interface interaction.

Recall the full dynamics \eqref{maineq}
\begin{equation*}
\left\{ \begin{array}{c}
\pt_t u - \Delta u=0, \quad y>0;\\
\pt_t u - \pt_y u + W'(u)=0, \quad y=0.
\end{array} \right. 
\end{equation*}
Let $\phi(x,y)$ be the metastable equilibrium solution to \eqref{phi}. Then we know $\phi$ satisfies \eqref{phiP}-\eqref{decayDi}.
The goal of this section is using $\phi$ to construct  supersolution and subsolution to \eqref{maineq} so that the dynamic solution is approximately  squeezed in between two static transition profiles provided time is large enough.

Recall the double well potential $W$ satisfies \eqref{potential}. This interfacial misfit potential $W(\cdot)$ on slip plane $\Gamma$ has two local minimums $\pm1$, which essentially determines and drives the whole dislocation dynamics to the metastable transition profile $\phi(x,y)$.  Let us first clarify some properties of  $W$.  Since $W''(\pm 1)>0$,
there exist constants $\mu>0$,
$\delta>0$ such that,
\begin{equation}\label{con_mu}
\begin{aligned}
W'(\phi+q)-W'(\phi)\geq \mu q\quad &\text{ for  }1-\delta \leq \phi \leq 1, \, 0<q<\delta,\\
W'(\phi+q)-W'(\phi)\geq \mu q \quad &\text{ for }-1 \leq \phi \leq -1+\delta, \, 0<q<1-\delta.
\end{aligned}
\end{equation}
Moreover, for $\phi\in[-1+\delta, 1-\delta]$, there exist constants $k>0$, $\beta\geq 0$ such that
\begin{equation}\label{con_k}
 |W'(\phi-q)-W'(\phi)|\leq kq ,
\end{equation}
 and by \eqref{phiP},
\begin{equation}\label{con_beta}
  \pt_x\phi(x, 0) \geq \beta>0 \quad \text{for } x \text{ such that } \phi(x)\in[-1+\delta, 1-\delta].
\end{equation}

Next, In Proposition \ref{prop1}, we state the crucial comparison principle for the dynamic solution to \eqref{maineq}. The proof of this proposition relies on the properties of the interfacial potential and  Lemma \ref{q-es} on the decay estimate of the linearized bulk-interface dynamics. Lemma \ref{q-es} ensures one can trap the full dynamics using merely an interfacial double-well potential. We will first give the proof of Proposition \ref{prop1} in this section, and then give the proof of Lemma \ref{q-es} in the next section.

\begin{prop}[Construction of supersolutions/subsolutions]\label{prop1}
Let $u(t,x,y)$ be the solution to bulk-interface dynamics \eqref{maineq} with initial data $u_0(x,y)$.
Suppose the initial data $u_0(x,y)$ satisfies 
Assumption \ref{asm1}.
Then there exist constants $\xi_1, \xi_2$, $C>0, M>0$ and $q_0(x,y)>0$, such that
\begin{equation}\label{supsub}
\phi(x-\xi_2+2Mt^{-\frac12},y)-\frac{q_0(x,y)}{1+C t^{\frac32}} \leq u(t,x,y) \leq \phi(x-\xi_1+2Mt^{-\frac12},y)+\frac{q_0(x,y)}{1+C t^{\frac32}},
\end{equation}
where $\phi(x,y)$ is the steady profile  satisfying \eqref{phi}.
\end{prop}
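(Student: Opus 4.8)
The plan is to adapt the Fife--McLeod super/subsolution method \cite{FM} to the bulk--interface setting. First I would postulate a supersolution of \eqref{maineq} of the form
\begin{equation*}
\bar u(t,x,y)=\phi\bbs{x-\xi_1+\zeta(t),\,y}+Q(t,x,y),
\end{equation*}
with $\zeta(t)$ a bounded monotone shift tending to a constant at the algebraic rate $\zeta(t)=O(t^{-1/2})$, so that $|\zeta'(t)|=O(t^{-3/2})$, and $Q(t,x,y)>0$ a perturbation decaying like $(1+t^{3/2})^{-1}$ in time and like $q_0(x,y)$ in $x$; the subsolution $\underline u=\phi(x-\xi_2+\zeta(t),y)-Q$ is the mirror construction, using that $W$ is even and $\phi$ odd about its centre. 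Since $\phi$ solves the stationary problem \eqref{phi}, substituting $\bar u$ and cancelling the $\phi$-terms reduces the supersolution property to the pair of inequalities
\begin{align*}
\zeta'(t)\,\pt_x\phi+\bbs{\pt_tQ-\Delta Q}&\ge 0 \qquad (y>0),\\
\zeta'(t)\,\pt_x\phi+\pt_tQ-\pt_yQ+\bbs{W'(\phi+Q)-W'(\phi)}&\ge 0 \qquad (y=0),
\end{align*}
which I would verify in turn.

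\textbf{Verifying the inequalities and closing.} The $y>0$ inequality holds once $Q$ is a supersolution of the heat equation, for which I would take $Q$ to be (a constant multiple of) the pointwise majorant supplied by Lemma \ref{q-es}. On $\Gamma$ I would split according to the value of $\phi$: where $\phi$ lies within $\delta$ of $\pm1$, the one-sided convexity \eqref{con_mu} gives $W'(\phi+Q)-W'(\phi)\ge\mu Q$, and this damping term, of size $\mu Q\sim(1+t^{3/2})^{-1}$, absorbs the shift error $|\zeta'(t)|\,\pt_x\phi=O(t^{-3/2})$ and the residual boundary terms---provided $q_0$ is chosen to dominate a fixed multiple of $\pt_x\phi$, which is consistent with $q_0\in L^1\cap L^\infty$, $q_0\to0$, since $\pt_x\phi(x,0)\sim x^{-2}$. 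In the transition zone $\phi\in[-1+\delta,1-\delta]$ convexity is unavailable and only the Lipschitz bound \eqref{con_k}, $|W'(\phi+Q)-W'(\phi)|\le kQ$, applies; there I would use the strict monotonicity \eqref{con_beta}, $\pt_x\phi\ge\beta>0$, so that the $\zeta'(t)\pt_x\phi$ term---with the right sign and magnitude---creates the room to absorb the $O(Q)$ errors. This step is what forces the shift to vanish no faster than the perturbation, pairing the rates $t^{-1/2}$ and $t^{-3/2}$. Once $\bar u,\underline u$ are seen to be super/subsolutions, Assumption \ref{asm1} gives the initial ordering $\underline u(0,\cdot)\le u_0\le\bar u(0,\cdot)$ after enlarging $q_0$ and tuning $C,M$, and the comparison principle for the parabolic bulk--interface problem \eqref{maineq}---legitimate since $W'\in C^1$ and the dynamic boundary condition is quasi-monotone---yields $\underline u\le u\le\bar u$ for all $t$, i.e.\ \eqref{supsub}.

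\textbf{Main obstacle.} The hard part is twofold, and is exactly what separates this problem from the classical $1$D Allen--Cahn case. First, one needs the \emph{pointwise} decay $Q(t,x,y)\lesssim(1+t^{3/2})^{-1}$ for the linearised bulk--interface flow \eqref{q-eq}: there is no time-independent Dirichlet-to-Neumann map to reduce the system to one dimension, the interfacial reaction couples to the bulk only through the Neumann trace $\pt_y q$, and the linear operator is not self-adjoint, so no spectral gap---and hence no exponential decay---is available; this is the content of Lemma \ref{q-es}, which I would prove in Section \ref{sec4} by decomposing the dynamic boundary condition into two heat problems coupled only on $\Gamma$ and then carrying out explicit heat-kernel estimates. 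Second, because the decay is merely algebraic, the slack for closing the transition-zone inequality is narrow: the exponential decay used in \cite{FM} leaves great latitude, whereas here the shift rate, the perturbation rate and the structural constants $\mu,k,\beta$ in \eqref{con_mu}--\eqref{con_beta} must be balanced simultaneously. Showing that this balance can be struck---so that an algebraically-small perturbation supported only on the interface still traps the whole half-plane dynamics---is the real content of the proof.
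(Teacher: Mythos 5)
Your strategy is essentially the paper's: a Fife--McLeod supersolution $\phi(x+\xi(t),y)+q$ with shift $\xi(t)=O(t^{-1/2})$, $\xi'\sim Mt^{-3/2}$, a trichotomy in the value of $\phi$ using \eqref{con_mu}, \eqref{con_k}, \eqref{con_beta}, the linearized bulk--interface decay of Lemma \ref{q-es} to size the shift, and parabolic comparison. However, two steps as you have written them do not go through. First, you cannot take the perturbation $Q$ to be ``the pointwise majorant supplied by Lemma \ref{q-es}'': the function $q_0(x,y)/(1+Ct^{3/2})$ is in general \emph{not} a supersolution of the heat equation, since its time derivative is negative and $-\Delta q_0$ has no sign ($q_0$ is only assumed bounded, integrable on $\Gamma$ and decaying in $x$). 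The construction requires $Q$ to be the exact solution $q$ of the linearized system \eqref{q-eq}, so that $\pt_t q-\Delta q=0$ in the bulk and $\pt_t q-\pt_y q=-\mu q$ on $\Gamma$ hold identically; Lemma \ref{q-es} then enters only a posteriori, once in the transition zone to ensure $\xi'= Mt^{-3/2}\ge(\mu+k)q/\beta$ is achievable, and once at the end to replace $q$ by $q_0/(1+Ct^{3/2})$ in \eqref{supsub}. Relatedly, near the wells nothing needs to be ``absorbed'': with $\xi'\ge 0$ the drift $\xi'\pt_x\phi$ has the favorable sign, so no extra requirement that $q_0$ dominate a multiple of $\pt_x\phi$ is needed.

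Second, and more seriously, your supersolution $\phi+Q$ is not truncated, and the boundary inequality cannot be verified on the set where $\phi(x+\xi(t),0)+Q\ge 1$. The convexity bound \eqref{con_mu} near $+1$ is only available for $0<q<\delta$, while initially the perturbation may be of order one; where $\phi+Q$ exceeds $1$ by a non-small amount, \eqref{potential} gives no control of $W'(\phi+Q)-W'(\phi)$ (for the periodic potential $W(u)=\tfrac1{\pi^2}(1+\cos\pi u)$ one can have $W'(\phi+Q)-W'(\phi)<\mu Q$ there), so the required inequality $\xi'\pt_x\phi-\mu Q+W'(\phi+Q)-W'(\phi)\ge 0$ fails. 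The paper handles exactly this by setting $\bar u=\min\{1,\phi(x+\xi(t),y)+q\}$: on the set $I_1=\{\phi\ge 1-q\}$ the supersolution is the constant $1$, which solves \eqref{maineq} because $W'(1)=0$, and the minimum of two supersolutions is again a supersolution. You need this truncation (and $\max\{-1,\phi-q\}$ for the subsolution) to close the argument; note also that the subsolution does not require $W$ even or $\phi$ odd --- only the one-sided convexity analogous to \eqref{con_mu} on the other side of each well, as in the paper's ``similar'' construction.
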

\begin{proof}
We will construct a supersolution as
\begin{equation}
\bar{u}(t,x,y):=\min\{ 1,  \phi(x+\xi(t), y) + q(t,x,y)\} \in [-1,1]
\end{equation}
by choosing $\xi(t)$ and $q(t,x,y)\geq 0$. The construction of subsolution is similar.

Step 1:
for $u_0$ satisfying  Assumption \ref{asm1}, there exists a number $\xi_1$ such that
\begin{equation}
u_0(x,y)\leq \phi(x-\xi_1, y)+ q_0(x,y),
\end{equation}
and $0<q_0< 1$, $\lim_{x\to \pm\8} q_0(x,y)=0$ uniformly in $y$.

 From the steady solution to \eqref{phi}, we know $\Delta \phi=0$ for $y>0$ and $\pt_y \phi = W'(\phi)$ on $y=0$. Therefore we obtain
\begin{align}\label{eq_sup}
\begin{array}{ll}
&\pt_t \bar{u} - \Delta \bar{u}
=\pt_x\phi(x+\xi(t),y) \xi' +\pt_t q -\Delta q \qquad \text{ for } y>0;\\
&\pt_t \bar{u} - \pt_y \bar{u} + W'(\bar{u})\\
& \qquad =\pt_x\phi(x+\xi(t),0)\xi'+ \pt_t q - \pt_y q - W'\big(\phi(x+\xi(t),0)\big) + W'\big(\phi(x+\xi(t),0)+q\big) \quad \text{ for } y=0.
\end{array}
\end{align}  
Here the function $\xi(t)$ with $\xi'\geq 0$ will be chosen explicitly in Step 3. Now we choose $q(t,x,y)$ with $0<q_0(x,y)<1$ such that
\begin{equation}\label{q-eq}
\begin{aligned}
\pt_t q - \Delta q =0, \quad y>0,\\
\pt_t q - \pt_y q + \mu q = 0, \quad y=0,
\end{aligned}
\end{equation}
where $\mu>0$ is the constant in \eqref{con_mu}.

Step 2: to prove $\bar{u}$ is a supersolution, divide the space into several sets
$$
  I_1:=\{(t, x, y); ~\phi(x+\xi(t),y) \in [1-q, 1] \},
  $$
  $$
  I_2:=\{(t, x, y); ~\phi(x+\xi(t),y) \in [1-\delta, 1-q] \},
  $$
  $$
  I_3:=\{(t, x, y); ~\phi(x+\xi(t),y) \in [-1+\delta, 1-\delta] \},
  $$
  $$
  I_4:=\{(t, x, y); ~\phi(x+\xi(t),y) \in [-1, -1+\delta] \}.
  $$
  Here, some sets being empty are allowed. We  now  estimate the right-hand-side of \eqref{eq_sup} for both the bulk and the interface in all possible cases as follows. 
  
  Case (i): if $(t, x, y)\in I_1$, then $\phi(x+\xi(t),y)+q(t,x,t)\geq 1$ and $\bar{u}\equiv 1.$
  
  Case (ii): if $(t, x, y)\in I_2$ or $(t, x, y)\in I_4$,
 then  from \eqref{con_mu}
\begin{equation}
- W'(\phi(x+\xi(t))) + W'(\phi(x+\xi(t))+q) \geq \mu q, \quad \text{ for some }\mu>0.
\end{equation}
Thus using \eqref{q-eq}, we have on $y=0$,
\begin{equation}
\pt_t \bar{u} - \pt_y \bar{u} + W'(\bar{u})\geq \pt_x\phi(x+\xi(t),y)\xi'+ \pt_t q - \pt_y q+ \mu q=\pt_x\phi(x+\xi(t),y)\xi'=: R_1,
\end{equation}
and for $y>0$
\begin{equation}
\pt_t \bar{u} - \Delta \bar{u}=\pt_x\phi(x+\xi(t),y) \xi' +\pt_t q -\Delta q=R_1.
\end{equation}
Here we used the equations for $q$ in \eqref{q-eq}.
Since the profile $\phi$ is increasing w.r.t $x$, so we know $R_1\geq 0$  and  conclude
\begin{equation}
\begin{aligned}
\pt_t \bar{u} - \Delta \bar{u}\geq 0 , \quad y>0;\\
\pt_t \bar{u} - \pt_y \bar{u} + W'(\bar{u})\geq 0, \quad y=0.
\end{aligned}
\end{equation}

Case (iii), if $(t,x,y)\in I_3$, then from \eqref{con_k} we have on $y=0$
\begin{equation}
\pt_t \bar{u} - \pt_y \bar{u} + W'(\bar{u})\geq \beta \xi'+ \pt_t q - \pt_y q- k q=: R_2.
\end{equation}
Since $q$ satisfies \eqref{q-eq}, $R_2\geq 0$ if and only if
\begin{equation}\label{xi_1}
\xi'\geq \frac{-\pt_t q + \pt_y q + kq}{\beta}= \frac{(\mu+k)q}{\beta}.
\end{equation}
Therefore, to prove $\bar{u}$ is a supersolution, we only need the following lemma to estimate the decay of $q(t,x,0)$. The proof of Lemma \ref{q-es} will be given later in Section \ref{sec4}.
\begin{lem}\label{q-es}
Let $q$ be the solution to \eqref{q-eq} with initial data $0<q_0<1$, then there exists $C>0$ such that
\begin{equation}\label{q_decay}
|q(t,x,y)|\leq \frac{q_0(x,y)}{1+C t^{\frac32}}.
\end{equation}
\end{lem}
As a consequence of this lemma and \eqref{xi_1}, we can choose $\xi(t)=\xi_1-2M t^{-\frac12}$ satisfying $\xi'(t)= M{t^{-\frac32}}$ with $M>0$ large enough such that \eqref{xi_1} holds.   Thus we obtain $\bar{u}$ is a supersolution and conclude \eqref{supsub}. The construction of subsolution is similar and we omit the details. 
\end{proof}

From the proof of Proposition \ref{prop1}, we see the construction relies on a time decay estimate for the linearized bulk-interface coupled dynamics \eqref{q-eq}.  

\section{The proof of Lemme \ref{q-es}} \label{sec4}
In this section, we give the proof of the key decay estimates in Lemma \ref{q-es} for the linearized system \eqref{q-eq}. The proof consists of (i) decomposing the bulk-interface coupled linear system as two heat equations with different dynamic boundary conditions; (ii) estimating the boundary stabilization rate for each sub-problems.

Step 1. Since the initial data $q_0$ has both nonzero bulk part and interface part, we first choose a proper decomposition of $q$ to decouple the dynamics for $y>0$ and $y=0$. 

Assume the initial data $q_0(x,y)$ can be expressed as bulk part and interface part
\begin{equation}
q_0(x,y)= q_0 \mathbb{\chi}_{y>0}  + q_0 \mathbb{\chi}_{y=0}  =: q_0^b(x,y) + q_0^s(x),
\end{equation}
where $\mathbb{\chi}$ is the characteristic function.
We do the Laplace transform of $q(t,x,y)$ with respect to $t$ using Laplace variable $\lambda$, the Fourier transform with respect to $x$ using Fourier variable $\eta$ and denote it as $\hat{q}:=\hat{q}(\lambda, \eta, y).$
Then $\hat{q}$ satisfies
\begin{equation}\label{q-z}
\begin{aligned}
\lambda \hat{q}-\pt_{yy}\hat{q}+ \eta^2 \hat{q}=\hat{q}_0^b,\quad y>0,\\
\lambda \hat{q}- \pt_y \hat{q} + \mu \hat{q} = \hat{q}_0^s, \quad y=0,
\end{aligned}
\end{equation}
where $\hat{q}_0^b=\hat{q}_0^b(\eta,y)$ (resp. $\hat{q}_0^s=\hat{q}_0^s(\eta)$) is the Fourier transform of $q_0^b(x,y)$ (resp. $q_0^s(x)$) with respect to $x$. Since the equations for $q$ are linear, we can construct $q=q_1+q_2$ with $\hat{q}=\hat{q}_1+ \hat{q}_2$ such that $\hat{q}_1$ satisfies
\begin{equation}\label{q1}
\begin{aligned}
\lambda \hat{q}_1-\pt_{yy}\hat{q}_1+ \eta^2 \hat{q}_1=0,\quad & y>0,\\
\lambda \hat{q}_1- \pt_y \hat{q}_1 + \mu \hat{q}_1 = \hat{q}_0^s+ \pt_y \hat{q}_2, \quad & y=0,
\end{aligned}
\end{equation} 
while $\hat{q}_2$ satisfies
\begin{equation}\label{q2}
\begin{aligned}
\lambda \hat{q}_2-\pt_{yy}\hat{q}_2+ \eta^2 \hat{q}_2=\hat{q}_0^b,\quad & y>0,\\
 \hat{q}_2(\lambda, \eta, 0)=0, \quad & y=0.
\end{aligned}
\end{equation}
One can first solve $\hat{q}_2$ and then $\hat{q}_1$ with the dynamic boundary input from $q_2$.

Step 2. Estimate  $q_1$ and $q_2$ separately.

First, for solution $\hat{q}_2$ satisfying \eqref{q2}, one can directly estimate the solution $q_2$ to the heat equation for $y>0$ with initial data $q_0^b$ and  Dirichlet boundary condition $q_2(t,x,0)=0$ on $y=0$. Denote $\bar{q}_0^b$ as the odd extension of $q_0^b$ to the whole space $\mathbb{R}^2$ and denote $\Phi(t,x,y)= \frac{1}{4\pi t}e^{-\frac{x^2+y^2}{4t}}$ as the 2D fundamental solution to heat equation in $\bR^2$. Then the solution formula for $q_2$ is given by $q_2 = \Phi * \bar{q}_0^b$.  Since $|\pt_y \Phi|\leq \frac{cy}{t^2}e^{-\frac{x^2+y^2}{4t}}$, then using the change of variable $s=\frac{y^2}{4t}$, we obtain
\begin{equation}\label{pt_y}
|\pt_y q_2|\leq c \int_0^{+\8}\int_{\mathbb{R}} |\pt_y \Phi| \ud x \ud y \leq c\int_{\mathbb{R}} \frac{1}{t^{\frac12}} e^{-\frac{x^2}{4t}} \ud x \int_0^{+\8} \frac{1}{t^\frac12} e^{-s} \ud s  \leq   \frac{c}{t^{\frac12}}.
\end{equation}

Second, for solution $\hat{q}_1$ satisfying \eqref{q1}, we seek solution with far field decay as $y\to +\8$
\begin{equation}
\hat{q}_1(\lambda, \eta, y) = \hat{q}_1(\lambda, \eta, 0) e^{-\sqrt{\lambda+\eta^2} y}.
\end{equation}
From \eqref{q1} and \eqref{q2}, we know $q_1$   satisfies the boundary condition
\begin{equation}\label{q1_22}
\pt_y \hat{q}_1(\lambda, \eta, 0) = - \sqrt{\eta^2 + \lambda} \hat{q}_1(\lambda, \eta, 0) = (\lambda+ \mu) \hat{q}_1(\lambda, \eta, 0)- \hat{q}_{2in}^s(\lambda,\eta),
\end{equation}
where 
\begin{align}\label{def_q2in}
\hat{q}_{2in}^s(\lambda,\eta):=\hat{q}_0^s(\eta)+ \pt_y\hat{q}_2(\lambda, \eta, 0),\qquad 
q^s_{2in}(t,x):= q_0^s(x) + \pt_y q_2(t,x,0).
\end{align}
Therefore 
\begin{equation}
\hat{q}_1(\lambda, \eta, 0) = \frac{\hat{q}^s_{2in}(\lambda,\eta)}{\lambda+ \mu + \sqrt{\lambda+\eta^2}}.
\end{equation}
Then by inverse transform we obtain
\begin{equation}\label{tm24}
q_1(t,x,0) = c q_{2in}^s(t,x) * \int_\Upsilon \int_{\mathbb{R}} e^{\lambda t} \frac{e^{i x \eta}}{\lambda+ \mu + \sqrt{\lambda+\eta^2}} \ud \eta \ud \lambda,
\end{equation}
where $\Upsilon$ is the  vertical line from $Re(\lambda)-T i$ to $Re(\lambda)+T i$ with $T\to +\8$  and $Re(\lambda)$ is greater than any singularities of $\int_{\mathbb{R}}\frac{e^{i x \eta}}{\lambda+ \mu + \sqrt{\lambda+\eta^2}} \ud \eta $. 

Third, fixing $t>0$, we now estimate the $L^\8(\bR)$ norm  for $q_1(t,x,0)$.

Observe that the branch point $\lambda=-\eta^2$ is a singularity. Then take the branch cut $(-\8, -\eta^2)$ and set $\lambda=-\eta^2 + r e^{i\theta}$ with $-\pi\leq \theta<\pi,\,r>0$. We calculate the roots of $\lambda+ \mu + \sqrt{\lambda+\eta^2}=0$ for $r>0$. It is sufficient to solve
\begin{equation}\label{agl}
\begin{aligned}
\mu - \eta^2 + r \cos \theta + \sqrt{r}\cos \frac{\theta}{2}=0,\\
r \sin \theta + \sqrt{r}\sin \frac{\theta}{2}=0.
\end{aligned}
\end{equation} 
For $-\pi\leq \theta<\pi$, \eqref{agl} indeed has no solution if $\eta^2< \mu$ and has only one solution if $\eta^2\geq \mu$
\begin{equation}\label{r}
\theta=0,\quad r+ \sqrt{r} = \eta^2 - \mu. 
\end{equation}
This shows that we can take $\Upsilon$ as any vertical line such that 
\begin{equation}\label{real}
0>Re(\lambda)>r^*-\eta^2= -\sqrt{r^*}-\mu,
\end{equation}
where $r^*$ is the solution to \eqref{r}.

To estimate the complex integral  $\int_\Upsilon \int_{\mathbb{R}} e^{\lambda t} \frac{e^{i x \eta}}{\lambda+ \mu + \sqrt{\lambda+\eta^2}} \ud \eta \ud \lambda$, we only need to consider two cases, i.e.,  Case (i) $\eta^2\geq  \mu$ and Case (ii) $\eta^2< \mu$.  For Case (ii), since there is no singularity in the complex integral,  by Cauchy's integral theorem and standard calculations,   it is sufficient to estimate the complex integral    for one piece of the contour, i.e. 
$$C_1:=\{\lambda=-\eta^2 + \eps e^{i\theta}, \quad -\frac{\pi}{2}<\theta<\frac{\pi}{2}\}.$$
For Case (i), to avoid the singularity at $\lambda=-\eta^2 + r^*$,  it is sufficient to estimate the complex integral for one piece of the contour, i.e. $$C_2:=\{\lambda=-\mu + \eps e^{i\theta},\quad -\frac{\pi}{2}<\theta<\frac{\pi}{2}\}.$$
Below, we explain detailed estimates for these two cases.

 Case (i). For $\eta^2\geq\mu$, from \eqref{real}, we can always choose  $Re(\lambda)\ll -\eps$, which gives the exponential decay w.r.t $t$
\begin{equation}\label{case1es}
\begin{aligned}
&\left|\int_{C_2} \int_{\mathbb{R}} e^{\lambda t} \frac{e^{i x \eta}}{\lambda+ \mu + \sqrt{\lambda+\eta^2}} \ud \eta \ud \lambda \right|_{L_x^1(\bR)}\\
 \leq& c e^{-\mu t }  \left| \int_{\bR} e^{ -( \sqrt{\alpha t} \eta - \frac{i x}{2\sqrt{\alpha t}} )^2} e^{\alpha \eta t} e^{-\frac{x^2}{4\alpha t}} \ud \eta \right|_{L_x^1(\bR)}  \leq  c e^{-\frac{\mu}{2}t}
\end{aligned}
\end{equation}
for $\alpha >0$ small enough and $t>0$ large enough.
 Thus for any $t>0$ large enough,  from \eqref{tm24} and Young's convolution inequality, we obtain the uniform estimate for $q_1$
\begin{equation}\label{q-exp}
\begin{aligned}
\|q_1(t,x,0)\|_{L^\8}\leq &  \left\|q_{2in}^s(t,x)\right\|_{L_x^\8}\left\|\int_\Gamma \int_{\mathbb{R}} e^{\lambda t} \frac{e^{i x \eta}}{\lambda+ \mu + \sqrt{\lambda+\eta^2}} \ud \eta \ud \lambda\right\|_{L_x^1}\\
\leq &\left\|q_0^s(x) + \frac{1}{\sqrt{t}}\right\|_{L_x^\8}\left\|\int_\Gamma \int_{\mathbb{R}} e^{\lambda t} \frac{e^{i x \eta}}{\lambda+ \mu + \sqrt{\lambda+\eta^2}} \ud \eta \ud \lambda\right\|_{L_x^1} \leq c  e^{-ct}.
\end{aligned}
\end{equation}
Here in the second inequality, we used definition of $q_{2in}^s$ in \eqref{def_q2in} and  the estimate \eqref{pt_y}, while in the last inequality we used \eqref{case1es}.

Case (ii).  For  $\eta^2<\mu$, we shall be careful about the part $\eta^2\ll 1$ and $0>Re(\lambda)> -\eps$, otherwise we still have the exponential decay as in \eqref{q-exp}. 
Denote $\xi=\sqrt{t}\eta$ and $z=\eps e^{i\theta}$ then
\begin{equation}\label{tm329}
\begin{aligned}
|q_1(t,x,0)|\leq &  \left|c q^s_{2in}(t,x) * \frac{1}{\mu} \int_{C_1} \int_{|\eta|\leq  1} e^{-\lambda t+ i x \eta} \ud \eta \ud \lambda \right|\\
=&  \left|c q^s_{2in}(t,x) * \frac{1}{\mu} \int_{-\frac{\pi}{2}}^{\frac{\pi}{2}} \int_{|\eta|\leq  1} e^{-\eta^2 t + \eps e^{i\theta} t+ i x \eta} \ud \eta \ud \theta \right|\\
\leq& \left| \frac{c}{\mu}  \frac{1}{t^{\frac32}} q_{2in}^s(t,x) *\int_{\mathbb{R}} e^{-\xi^2 + i \frac{x\xi}{\sqrt{t}}} \ud \xi \right|  \\
\leq& \left|\frac{c}{t^{\frac32}} { (q_{2in}^s(t,x) * e^{-\frac{x^2}{2t}} )} \int_{\mathbb{R}} e^{-\frac{(\xi-\frac{x i }{\sqrt{2t}})^2}{2}}\ud \xi \right|,
\end{aligned}
\end{equation}
where the factor $\frac{1}{t^{\frac32}}$ comes from the change of variable $\xi = \sqrt{t} \eta$ and $\tilde{z}=z t$.
Then from the definition of $q_{2in}^s$ in \eqref{def_q2in}, the estimate \eqref{pt_y} and  Young's convolution inequality, we obtain
\begin{equation}\label{q1est}
\begin{aligned}
\|q_1(t,x,0)\|_{L^\8}\leq& \frac{c}{t^{\frac32}} \Big\| [q_{0}^s(x)+ \pt_y q_2(t,x,0) ]* e^{-\frac{x^2}{2t}} \Big\|_{L_x^\8} \\
  \leq&  \frac{c}{t^{\frac32}} \Big\| q_{0}^s(x) * e^{-\frac{x^2}{2t}}+ \frac{1}{\sqrt{t}} * e^{-\frac{x^2}{2t}}\Big\|_{L_x^\8} \\
 \leq & \frac{c}{t^{\frac32}} \bbs{  \|q_0^s\|_{L^1_x} + \frac{1}{\sqrt{t}} \|e^{-\frac{x^2}{2t}}\|_{L^1_x} } \leq  \frac{c}{t^{\frac32}},
\end{aligned}
\end{equation}
where we used $\|e^{-\frac{x^2}{2t}}\|_{L^1_x}\leq c t^{\frac12}$.

Step 3. Combine estimates for $q_1$ and $q_2$ to estimate $q(t,x,0)$ and $q(t,x,y)$.

First from \eqref{q1_22} and $\hat{q}_2(\lambda, \eta, 0)=0$, we have on $y=0$
\begin{equation}
\pt_y \hat{q} = \pt_y \hat{q}_1 + \pt_y \hat{q}_2 = - \sqrt{\eta^2 + \lambda} \hat{q}_1 + \pt_y \hat{q}_2=- \sqrt{\eta^2 + \lambda} \hat{q} + \pt_y \hat{q}_2.
\end{equation}
This, together with the boundary condition for $\hat{q}$ in \eqref{q-z}, we conclude on $y=0$
\begin{equation}
(\lambda+ \mu + \sqrt{\eta^2 + \lambda}) \hat{q} - \pt_y \hat{q}_2 = \hat{q}_0^s.
\end{equation}
Therefore from \eqref{def_q2in} and all cases discussed in Step 2,  one have for $t$ large enough, 
\begin{equation}
\|q(t,x,0)\|_{L^\8}= \left\|c q_{2in}^s *   \int_\Upsilon \int_{\mathbb{R}} e^{\lambda t} \frac{e^{i x \eta}}{\lambda+ \mu + \sqrt{\lambda+\eta^2}} \ud \eta \ud \lambda \right\|_{L^\8} \leq \frac{c}{t^{\frac32}},
\end{equation}
where $\Upsilon$ is the same path as \eqref{tm24}. Thus from the maximal principle for heat equation, we conclude the solution $q$ to \eqref{q-eq} satisfies the estimate \eqref{q_decay}.

{\begin{rem}
Suppose the assumption for the initial data changes to  Assumption II: Assume the initial data $u_0(x,y)=\phi(x-x_0,y)+q_0(x,y)$ for some $x_0$ and $q_0(x,0)$ satisfies $\|q_0(x,0)\|_{L^p_x(\mathbb{R})}\leq c$ for some $1<p<\8$.
Then \eqref{q1est} becomes
$
\|q_1(t,x,0)\|_{L^\8(\bR)}  \leq   \frac{c}{t^{1+\frac1{2p}}}.
$
Indeed, from the interpolation inequality, we know
\begin{equation}
\|e^{-\frac{x^2}{2t}}\|_{L^q_x} \leq \|e^{-\frac{x^2}{2t}}\|_{L^1_x}^{\frac1q} \|e^{-\frac{x^2}{2t}}\|_{L^\8_x}^{1-\frac1q}\leq c  t^{\frac1{2q}},
\end{equation}
for $1<q<\8$.
Then by  Young's convolution inequality, we obtain
\begin{equation}
\Big\| q_{0}^s(x) * e^{-\frac{x^2}{2t}} \Big\|_{L^\8}\leq \big\|q_0^s\big\|_{L^p}\big\|e^{-\frac{x^2}{2t}}\big\|_{L^q} \leq  c  t^{\frac1{2q}}
\end{equation}
for $\frac1p+\frac1q=1.$ Therefore
\begin{equation}
\begin{aligned}
\|q_1(t,x,0)\|_{L^\8}\leq& \frac{c}{t^{\frac32}} \Big\| q_{0}^s * e^{-\frac{x^2}{2t}} + \frac{1}{\sqrt{t}}   * e^{-\frac{x^2}{2t}} \Big\|_{L^\8}  \leq  \frac{c}{t^{1+\frac1{2p}}}.
\end{aligned}
\end{equation}
As a consequence, in \eqref{xi_1} we can choose $\xi(t)=\xi_1-2pM t^{-\frac1{2p}}$ with $\xi'(t)= M{t^{-1-\frac1{2p}}}$ and $M>0$ large enough such that \eqref{xi_1} holds.
\end{rem} 
 }

\section{Proof of Theorem \ref{mainthm}: Uniform convergence to metastable equilibrium}\label{sec3.2}

The  crucial comparison principle obtained
in Proposition \ref{prop1} helps us to control the dynamic solution in between two steady transition profiles as time becomes large enough. In this section,  combining Proposition \ref{prop1} with the energy dissipation law \eqref{energyD}, we  prove the uniform convergence of the dynamic solution of \eqref{maineq} to the equilibrium $\phi(x,y)$.

\begin{proof}[Proof of Theorem \ref{mainthm}]
Step 1. From the boundedness of initial data $u^0$ and smoothness of the nonlinear potential $W$, by the  parabolic interior regularity, the solution $u$ to \eqref{maineq} satisfies the following uniform bounds
\begin{equation}
|u|\leq c, \quad |\nabla u|\leq c, \quad |D^2 u| \leq c.
\end{equation}
Then from the Arzela-Ascoli Theorem, for any bounded set $B_k$, there exist $u^*(x,y)$ and $t_{n_k}^k$ such that as $n_k\to +\8$,
\begin{equation}
u(t^k_{n_k}, \cdot, \cdot) \to u^*(\cdot, \cdot) \quad\text{ uniformly in } B_k.
\end{equation}
Then by diagonal argument, there exists subsequence $t^\ell_\ell$ such that as $\ell\to+\8$
\begin{equation}\label{bin}
u(t^\ell_{\ell}, \cdot, \cdot) \to u^*(\cdot, \cdot) \quad\text{ uniformly in } B_\ell.
\end{equation}

Step 2. From Proposition \ref{prop1}, we know for $(x,y)\in B_\ell^c$, it also holds
\begin{equation}\label{tmC}
\phi(x-\xi_2+2Mt^{-\frac12},y)-\frac{q_0(x,y)}{1+C t^{\frac32}} \leq u \leq \phi(x-\xi_1+2Mt^{-\frac12},y)+\frac{q_0(x,y)}{1+C t^{\frac32}}. 
\end{equation}
Combining \eqref{tmC} with the uniform decay of $|\nabla \phi|$ in \eqref{decayDi}, we know there exists $N$ large enough such that for any $\ell>N$ and for any fixed $c$,
\begin{equation}
|u(t^\ell_\ell,x,y) -\phi(x-c,y)| <\eps\quad \text{ uniformly in } B^c_\ell.
\end{equation}
This, together with \eqref{bin}, shows for any $\eps>0$,
there exists $N$ large enough such that for any $\ell>N$
\begin{equation}\label{Usub}
|u(t^\ell_\ell,x,y) - u^*(x, y)|<\eps  \quad \text{ uniformly for } (x,y)\in \bR^2_+
\end{equation}
and   there exist constants $c_1,\,c_2$ such that $u^*$ satisfies
\begin{equation}
\phi(x-c_2, y)-\eps \leq u^*(x,y) \leq \phi(x-c_1, y)+\eps.
\end{equation}

Step 3. Recall the total energy $E$ defined in \eqref{totalE} for system \eqref{maineq}. Notice the energy dissipation law \eqref{energyD} for dynamic solution $u$
$$
\dot{E}(t) =- Q(t)\leq 0.
$$
Now we claim there is a lower bound for $E(t)$ provided $t$ large enough. Indeed, the first term in $E$ is positive while the second term in $E$ is bounded
\begin{equation}
\begin{aligned}
&\left|\int_\Gamma W(u) \ud x\right| \\
=& \Big|\int_{y=0, x>0} W'(1)(u-1) \ud x +\int_{y=0, x>0} W''(\xi)(u-1)^2 \ud x\\
&+ \int_{y=0, x<0} W'(-1)(u+1) \ud x +\int_{y=0, x<0} W''(\xi)(u+1)^2 \ud x\Big|\\
\leq& c \int_{y=0, x>0} (u-1)^2 \ud x + c \int_{y=0, x<0} (u+1)^2 \ud x,
\end{aligned}
\end{equation}
where we used properties for $W$ in \eqref{potential}.
Then combining the decay rate of $\phi(x,0)$ in \eqref{decayR} with \eqref{tmC}, for $u(t_\ell^\ell)$ with $t^\ell_\ell$ large enough, we know
\begin{equation}
\left|\int_\Gamma W(u) \ud x\right| \leq c \int_\Gamma \frac{1}{(1+x)^2} \ud x \leq c.
\end{equation}
This implies a lower bound for $E(t)$  provided $t$ is large enough.
Therefore there exists a subsequence (still denoted as $t_\ell^\ell$) such that when $t_\ell^\ell\to +\8$, we must have $Q(t_\ell^\ell)\to 0.$ Thus we know
\begin{equation}
\begin{aligned}
\liminf_{t_\ell^\ell \to 0} \int_{\bR^2_+} |\Delta u|^2 \ud x \ud y \leq\liminf_{t_\ell^\ell \to 0} Q(t^\ell_\ell) = 0,\\
\liminf_{t_\ell^\ell \to 0} \int_{\Gamma} |\pt_y u - W'(u)|^2 \ud x  \leq\liminf_{t_\ell^\ell \to 0} Q(t^\ell_\ell) = 0.
\end{aligned}
\end{equation}
This, together with \eqref{Usub} and Fatou's lemma,  yields the limit $u^*$ satisfies the static equation \eqref{phi}. From the uniqueness (upto translation) of static problem \eqref{phi}, we know $u^*(x,y)=\phi(x-x_0, y)$ for some $x_0.$

Step 4. Combining with the small data stability due to Proposition \ref{prop1}, we know for any $\eps>0$ there exists $T$ such that for any $t>T$
\begin{equation}
|u(t, x,y)-\phi(x-x_0,y)|<\eps \quad \text{ uniformly for }(x,y)\in \bR^2_+.
\end{equation}
This concludes the uniform convergence result in  Theorem \ref{mainthm}.
\end{proof}

We further remark that the uniform convergence result is a global stability result for a single metastable transition profile $\phi(x,y)$. For a slow bulk diffusion coupled with fast boundary reaction system, one can also use this global stability result for single profile to study the pattern formation of $N$ multilayer transition profile at a finite time.  This will rely on   obtaining a localized version for the uniform stability result in Theorem \ref{mainthm}, which also bases on the fat tail estimate for the transition profile $\phi$. We will leave the multilayer pattern formation and its slow motion persistence as a future study.

\appendix

\section{Vectorial dislocation model with an interfacial misfit energy}\label{app1}
We briefly introduce the physical model and the associated total energy for dislocations. Then we derive the 2D bulk-interface interactions through gradient flows of a simplified total energy $E$ in \eqref{totalE}. The associated energy dissipation law \eqref{energyD} is important for the proof of the uniform convergence of the dynamic solution.

A dislocation core is a  microscopic region of heavily
distorted atomistic structures with shear displacement jump across a slip plane $\Gamma:=\{(x,y,z);~y=0\}$.  The propagation of a dislocation core, i.e., distorted displacement profile, will eventually lead to plastic deformation with low energy barrier. 
Unlike the classical dislocation theory \cite{HL}, which  regards the dislocation core as a
singular point and use  linear elasticity theory,  the Peierls-Nabarro  model introduced by \textsc{Peierls and Nabarro} \cite{Peierls, Nabarro}  is
a multiscale continuum model for displacement $\mathbf{u}=(u_1, u_2, u_3)$ that incorporates the atomistic effect by introducing a nonlinear interfacial
potential $W$ on the slip plane $\Gamma$.  Two elastic continua $y>0$ and $y<0$ are connected by the nonlinear atomistic potential
$W([u_1])$ depending on shear displacement jump $[u_1]$ across the slip plane $\Gamma$; see Fig. \ref{fig1} (Left). 
To minimize the elastic energy and the misfit energy induced by dislocation, the steady solution to PN model is a minimization problem
\begin{equation}\label{mini}
\mathbf{u}=\text{argmin}~\left\{ E_{\text{elastic}}(\mathbf{u})+ E_{\text{misfit}}(\mathbf{u})\right\}= \text{argmin}~ \left\{\frac12\int_{\mathbb{R}^3\backslash \Gamma} \sigma:\eps \ud x \ud y \ud z  + \int_\Gamma W([u_1]) \ud x \ud z \right\},
\end{equation}
among all displacements fields $\mathbf u$ with bi-states far field condition $[u_1](\pm\8)=\pm2$, where   $\eps$ is the strain tensor and $\sigma$ is the stress tensor. Here  $W$ is a Ginzburg-Landau type potential on interface which  determines the stable  states for the shear displacement and drives metastable pattern formation; see \eqref{potential}. Without loss of generality,  we assume a symmetric displacement in the upper/lower elastic bulks and fix the total magnitude of the dislocation, so the minimization constraint   in \eqref{mini} can be simplified as that for any $z$,
$
\lim_{x\to \pm\8}u_1(x, 0,z) = \pm 1.
$
A simplified total energy (see \eqref{totalE}) in terms of the scalar shear displacement function $u=u_1(x,y,z)$ is commonly used in  mathematical analysis
\begin{equation} 
E( {u})=\frac 1 2\int_{\bR^2_+} |\nabla  {u}|^2\ud x\ud y+\int_\Gamma W( {u})\ud x.
\end{equation}
For a straight dislocation with uniform displacement in $z$ direction, the equivalence between the minimizers of the simplified energy $E$ and of the original physical energy in \eqref{mini} are proved in \cite{gao2021revisit}. Meanwhile, if the misfit potential only depends on the shear jump displacement across the slip plane $\Gamma$, then the rigidity result in \cite{dong2021existence, gao2020existence} shows the steady solution must be a straight dislocation with 1D profile. Therefore, we will  use the simplified total energy \eqref{totalE} to derive and study the full dynamics of dislocations expressed in terms of scalar displacement $u(x,y)$. However, the full dynamics and global stability for the true vectorial dislocation model is a challenging future project.

\subsection{Model derivation via gradient flow} 
In this section, we derive the bulk-interface interactive dynamics \eqref{maineq}  via gradient flow of the total energy \eqref{totalE}.
Consider the upper half plane 
\begin{equation}
\bR^2_+:=\{(x,y)\in \mathbb{R}^2; ~ y> 0\}; \quad \overline{\bR^2}_+:=\{(x,y)\in \mathbb{R}^2; ~ y\geq 0\}.
\end{equation}
Denote 
$\Gamma:=\{(x,y); ~y=0\}, \,\, u|_{\Gamma}=u(x,0).$

The full dynamics of dislocation motion is essentially determined by the total energy and how the energy change against frictions for the bulks and on the slip plane. First, one can compute the rate of change of total energy w.r.t any virtual velocity $\dot{u}$. Then to determine the true velocity by Onsager's linear response theory  \cite{Onsager}, we use the simplest quadratic Rayleigh dissipation functional  including frictions in the bulks and on slip plane $\Gamma$ as the dissipation metric.

For any  velocities $\dot{u}, \dot{v}$, choose the quadratic  Rayleigh dissipation functional 
$$g(\dot{u}, \dot{v}):=\int_{\mathbb{R}^2_+} \dot{u} \dot{v} \ud x \ud y + \int_{\Gamma} \dot{ u}_{\Gamma} \dot{ v}_{\Gamma} \ud x. $$
Here, without loss of generality, we take same friction coefficients for the bulk velocity and for its trace on slip plane $\Gamma$.
Then the gradient flow of $E$ with respect to metric $g(\cdot, \cdot)$ is $g(\pt_t u , \dot{u}) = - \frac{\ud}{\ud \eps}\big|_{\eps=0} E(u+\eps \dot{u})$ for any any virtual velocity $\dot{u}$.  After calculating the first variation of $E$, this gradient flow  reads 
\begin{equation}\label{gf}
\begin{aligned}
g(\pt_t u , \dot{u}) =& - \frac{\ud}{\ud \eps}\big|_{\eps=0} E(u+\eps \dot{u}) \\
=& \int_{\mathbb{R}^2_+} \nabla u \nabla \dot{u} \ud x \ud y + \int_\Gamma W'(u) \dot{u} \ud x \\
=& -\int_{\mathbb{R}^2_+} \Delta u \dot{u} \ud x \ud y + \int_{\Gamma}[\pt_n u + W'(u)] \dot{u} \ud x.
\end{aligned}
\end{equation}
Then by taking arbitrary virtual velocity $\dot{u}$, we conclude the governing equation \eqref{maineq}. From same calculations as \eqref{gf}, we also have the energy dissipation law
\begin{equation}\label{energyD}
\begin{aligned}
\frac{\ud}{\ud t} E(u) =& \int_{\bR^2_+} \nabla u \nabla u_t \ud x \ud y + \int_\Gamma W'(u) u_t \ud x\\
=& -\int_{\bR^2_+} u_t^2 \ud x \ud y - \int_\Gamma u_t^2 \ud x \\
=& -\int_{\bR^2_+} |\Delta u|^2 \ud x \ud y - \int_\Gamma [\pt_y u -W'(u)]^2 \ud x =: -Q(t)\leq 0.
\end{aligned}
\end{equation}

\bibliographystyle{alpha}
\bibliography{dbc}

\end{document}